\documentclass[letterpaper, 10 pt, conference]{ieeeconf}                                                         
\IEEEoverridecommandlockouts                          
\overrideIEEEmargins
\pdfobjcompresslevel=0
\usepackage[noadjust]{cite}

\usepackage{amsmath} 
\usepackage[nolist]{acronym}
\usepackage{xcolor}
\usepackage{amsmath}
\usepackage{amssymb}
\usepackage{mathtools}
\usepackage{subcaption}
\usepackage{hyperref}
\hypersetup{
    colorlinks,
    citecolor=red,
    filecolor=black,
    linkcolor=blue,
    urlcolor=blue
}
\usepackage[ruled]{algorithm2e}
\usepackage{mathrsfs}
\usepackage[letterpaper, left=54pt, right=54pt, bottom=54pt, top=54pt]{geometry} % FOR IEEE margin requirements

% Specific symbols
\newcommand{\gh}{\hat{\mathcal{G}}}
\newcommand{\fs}{\mathcal{F}}
\newcommand{\ws}{\mathcal{W}}

%Single Symbols
 
\newcommand{\R}{\mathbb{R}}

\newcommand{\N}{\mathbb{N}}

\newcommand{\psd}{\mathbb{S}}

\newcommand{\gs}{\mathcal{G}}

%Paired Commands

\DeclarePairedDelimiter{\norm}{\lVert}{\rVert}

\DeclarePairedDelimiterX{\inp}[2]{\langle}{\rangle}{#1, #2}

%Operators and problems
\DeclareMathOperator*{\argmin}{arg\!\,min}

\DeclareMathOperator*{\find}{find}

%theorem environment
% \usepackage{amsthm}
\newtheorem{thm}{Theorem}[section]

\newtheorem{defn}{Definition}[section]

\newtheorem{rem}{Remark} 
\newtheorem{prob}{Problem} 
\newtheorem{assum}{Assumption} 
\usepackage{balance}
\usepackage{color}
\usepackage{ulem}

\title{\LARGE \bf Assessing the Quality of a Set of Basis Functions for Inverse \\ Optimal Control via Projection onto Global Minimizers}

\author{Filip Be\v{c}anovi\'{c}$^{1,4,*}$ Jared Miller$^{2,*}$ Vincent Bonnet$^3$ Kosta Jovanović$^4$ Samer Mohammed$^1$
\thanks{$^*$ Denotes equal contribution.}
\thanks{$^1$F. Be\v{c}anovi\'{c}, S. Mohammed are with the Université Paris-Est Créteil, Créteil, France. $^2$J. Miller is with the  Robust Systems Laboratory, ECE Department, Northeastern University, Boston, MA 02115. $^3$V. Bonnet is with the Laboratory of Analysis and Architecture of Systems, Toulouse. $^4$F. Be\v{c}anovi\'{c}, K. Jovanović are with the School of Electrical Engineering, University of Belgrade, Belgrade, Serbia.}
\thanks{© 2022 IEEE.  Personal use of this material is permitted.  Permission from IEEE must be obtained for all other uses, in any current or future media, including reprinting/republishing this material for advertising or promotional purposes, creating new collective works, for resale or redistribution to servers or lists, or reuse of any copyrighted component of this work in other works. This is the author's accepted manuscript of an article published in 61st IEEE Conference on Decision and Control. The final published version is available online at: \href{https://doi.org/10.1109/CDC51059.2022.9993342}{https://doi.org/10.1109/CDC51059.2022.9993342}.
}
\thanks{Cite as:
F. Bečanović, J. Miller, V. Bonnet, K. Jovanović and S. Mohammed, "Assessing the Quality of a Set of Basis Functions for Inverse Optimal Control via Projection onto Global Minimizers," 2022 IEEE 61st Conference on Decision and Control (CDC), Cancun, Mexico, 2022, pp. 7598-7605.
}
}

\begin{document}
\maketitle
\thispagestyle{empty}
\pagestyle{empty}
\begin{abstract}
\label{sec:abstract}
Inverse optimization (Inverse optimal control) is the task of imputing a cost function such that given test points (trajectories) are (nearly) optimal with respect to the discovered cost. Prior methods in inverse optimization assume that the true cost is a convex combination of a set of convex basis functions and that this basis is consistent with the test points. However, the consistency assumption is not always justified, as in many applications the principles by which the data is generated are not well understood. This work proposes using the distance between a test point and the set of global optima generated by the convex combinations of the convex basis functions as a measurement for the expressive quality of the basis with respect to the test point. A large minimal distance invalidates the set of basis functions. The concept of a set of global optima is introduced and its properties are explored in unconstrained and constrained settings. Upper and lower bounds for the minimum distance in the convex quadratic setting are implemented by bi-level gradient descent and an enriched linear matrix inequality respectively. Extensions to this framework include max-representable basis functions, nonconvex basis functions (local minima), and applying polynomial optimization techniques.
\end{abstract}
\section{Introduction}
\label{sec:introduction}

There is a consensus in the literature on biomechanics, human motor control, and robotics to say that numerous human motions can be modeled and predicted using optimal control approaches \cite{berret2011evidence, albrecht2012bilevel, mombaur2010human, lin2016human, panchea2018human, westermann2020inverse}. This is especially the case for repetitive motions or tasks that are subject to strict biomechanical constraints such as walking \cite{clever2016inverse}. To determine the cost function(s) used by humans, the use of \ac{IOC} methods has been proposed and extensively studied \cite{berret2011evidence, albrecht2012bilevel, mombaur2010human, lin2016human, panchea2018human, westermann2020inverse}. Let a general motion be represented as a vector $x$ from $\R^n$, and in particular, let observations be denoted by $y$. The application of \ac{IOC} requires at least one (nearly) optimal observation $y \in \R^n$, knowledge of the constraints related to the biomechanics of the task and of the human body that define the set of feasible motions $X \subseteq \R^n$, and  a finite-cardinality \textit{set of basis cost functions} $\fs = \{ f_i(x) \}_{i=1}^m$. Under the assumption that the cost function underlying human motion is a convex combination of functions from the basis $f(x) = \sum_{i=1}^m \alpha_i f_i(x)$, the goal of \ac{IOC} is to identify parameters $\alpha_i$ such that the observation(s) $y$ is (are) optimal. The basis functions are chosen based on physiological observations and are often related to the mechanical energy or motion smoothness \cite{berret2011evidence}.

\ac{IOC} is often solved in human motor control or robotics by using a computationally expensive bi-level optimization approach \cite{mombaur2010human, berret2011evidence, albrecht2012bilevel, clever2016inverse}. The bi-level approach consists of two optimization processes, one nested inside the other, that explicitly aim to minimize over the parameters $\alpha$ the Root-Mean-Square-Error between optimal observation $y$ and the result of the \textit{so-called} \ac{DOC}  problem that returns the optimal solution $x^*$ given the cost function $f(x)$.
This approach tends to be abandoned due to its long execution time, but has the merit of being able to deal with relatively \textit{noisy} input data \cite{aswani2018inverse} since the explicit minimization of the distance between $y$ and the result of the inner optimization problem is accounted for.

Recently, an approximate solution method for the \ac{IOC} problem was proposed based on a relaxation of the Karush-Kuhn-Tucker (KKT) optimality conditions, allowing to look for solutions by solving a linear least-square KKT-\textit{residual} minimization \cite{keshavarz2011imputing}. Even for high-dimensional non-linear models of the DOC, the \ac{IOC} admits an almost instantaneous solution, but requires the observation $y$ to be within, or very close, to the set of possible solutions of the \ac{DOC} denoted $\gs^c$ later (or $\gs$ in the absence of constraints).  This method has motivated a fair amount of work in robotics \cite{jin2019inverse, englert2017inverse, jin2020inverse, jin2021inverse} and human-motion analysis \cite{lin2016human, panchea2018human, westermann2020inverse}. The method is based on a relaxation of the exact feasibility problem of inverse optimization, formulated as Problem \ref{prob:feas_cons} (\textit{resp.} \ref{prob:feas_uncons}), for a constrained (\textit{resp.} unconstrained) direct optimization problem.

Though these approximate methods \cite{keshavarz2011imputing, johnson2013inverse, jin2019inverse} have considerable computational advantages with respect to the bi-level approach, they are known to be sensitive to noise \cite{panchea2017inverse, thai2018imputing} and are statistically inconsistent \cite{aswani2018inverse}. However, when dealing with human motion inherent modelling errors arise in treatment of the complex musculoskeletal system, with non negligible measurement errors. For example, joint angles, that are often used as optimal observation $y$ for \ac{IOC}, can have several degrees of error at the hip level during ground walking \cite{stagni2000effects}. 

Few studies \cite{colombel2021reliability} propose to evaluate how well a basis of cost functions $\fs$ can represent a given motion $y$. One way could be to perform repeated \ac{IOC}-\ac{DOC} cycles with different initialization for either the bi-level or the approximate approaches, and look for the smallest obtainable error. Another approach would be to use the residual of the approximate KKT approach \cite{keshavarz2011imputing}. Unfortunately, what constitutes a good value for the residual can wildly vary from problem to problem, and even within the same problem depending on the conditioning \cite{colombel2021reliability}. 

This work proposes to evaluate how well a basis of cost functions $\fs$ can represent a given motion $y$.
To make this concept intuitive, the concept of the \textit{set of global minima} $\gs^c$ ($\gs$) is introduced. The problem of evaluating a basis of cost functions $\fs$ is then formulated as the minimum-distance projection from the observation $y$ to the set of global minima $\gs^c$ ($\gs$), which is equivalent to finding the global minima of the general non-convex bi-level problem with no reliable way of finding global solutions. Under some assumptions, one can bound this distance from below. Meanwhile the distance between any point from $\gs^c$ to $y$ yields an upper bound on the minimum distance. One should strive to make the bound tighter by picking at least a local minimum of the bi-level problem. An approach based on an enhanced LMI will be presented for bounding the distance from below. As medium-sized LMIs are readily solved, this approach applies to problems of moderate sizes.
 
The set of global minima $\gs^c$ ($\gs)$ is characterized for convex bases $\fs$, both with constrained and unconstrained \ac{DOC} models. The problem of checking whether there exists a cost function parametrization $\alpha$ which makes the observation $y$ exactly optimal is formulated as a linear program. The projection problem, equivalent to the bi-level approach, is then re-formulated as a single-level optimization process using the classical KKT-transformation of bi-level problems \cite{dempe2015bilevel}. This reformulation allows for the application of enhanced LMIs for lower bounding the distance, in cases where the \ac{DOC} model is polynomial in cost and constraints.

Further considerations are restricted to unconstrained and constrained quadratic programs as \ac{DOC} models. Meaning the members of the set of basis functions $\fs$ are assumed quadratic and the constraints are, if existent, assumed linear. The geometry of the corresponding sets $\gs$ and $\gs^c$ are investigated, as well as the mappings between optimal motions $x$, cost function parametrizations $\alpha$, and the dual variables $\lambda$ and $\mu$ when constraints are present, through the extended global minima sets $\gh$ and $\gh^c$.
In this context, the contributions of this work are,
\begin{itemize}
    \item The formulation of the \ac{pgm} problem \ref{prob:pgm},
    \item The characterization of global optima set geometry,
    \item The use of numerical algorithms to determine upper and lower bounds in the convex quadratic setting.
\end{itemize}

This paper has the following structure: 
Section \ref{sec:prelim} defines notation and introduces the problems that will be addressed in this work.
Section \ref{sec:geometry_uncons} describes the structure of unconstrained global-optima sets, with specific reference to cases where each $f_j$ is a strongly convex quadratic. Section \ref{sec:geometry_cons} analyzes constrained global-optima sets, and similarly focuses on the \ac{QP} case where the objective functions are weakly convex.
Section \ref{sec:numerical_methods} acquires upper bounds for program \ref{prob:pgm} using local search and lower bounds  through an \ac{LMI}. Section \ref{sec:numerical_methods} presents numerical examples of the proposed \ac{pgm} approach. Section \ref{sec:extensions} details extensions to the current method, including adding support for maximum-representable functions in the cost functions set  $\fs$. 
\begin{acronym}[ProjGM]
\acro{DOC}{Direct Optimal Control}
\acro{IOC}{Inverse Optimal Control}
\acro{PIO}{Parametric Inverse Optimization}
\acro{KKT}{Karush-Kuhn-Tucker}
\acro{LMI}{Linear Matrix Inequality}
\acroplural{LMI}[LMIs]{Linear Matrix Inequalities}
\acroindefinite{LMI}{an}{a}

\acro{LP}{Linear Program}
\acro{POP}{Polynomial Optimization Problem}
\acro{pgm}[ProjGM]{Projection onto Global Minimizers}
\acro{PD}{Positive Definite}
\acro{PSD}{Positive Semidefinite}
\acro{QCQP}{Quadratically Constrained Quadratic Programming}
\acro{QP}{Quadratic Program}
\acro{SDP}{Semidefinite Program}
\acro{SOS}{Sum of Squares}
\end{acronym}
\section{Preliminaries}
\label{sec:prelim}
\subsection{Notations}
The set of real numbers is $\R$, the $n$-dimensional Euclidean space is $\R^n$, and its nonnegative real orthant is $\R^n_+$. The vector of all zeros is $0$ and of all ones is $\mathbf{1}$. The set of natural numbers (incl. 0) is $\N$, and its subset between $1$ and $N$ is $1..N$. The $n$-dimensional probability simplex is $\Delta^n$.  The set of $m\times n$ real-valued matrices is $\R^{m \times n}$. The set of $n\times n$ symmetric real matrices satisfying $Q = Q^T$ is $\psd^n$.
A symmetric matrix is \ac{PSD} $(Q \in \psd^n_+)$ if $\forall x \in \R^n: \ x^T Q x \geq 0$, and is \ac{PD} $(Q \in \psd^n_{++})$ if $\forall x \in \R^n, x \neq 0: x^T Q x > 0$.

Let $X$ and $Y$ be a pair of spaces. A single valued function may be written as $f:X \rightarrow Y$, and a set valued function $F: 2^X \rightarrow 2^Y$ may be expressed as $F: X \rightrightarrows Y$.
The projection $\pi^x: (x, y) \mapsto x$ operation applied to a set $\hat{\mathbb{J}} \subseteq X \times Y$ is, 
\begin{equation}
    \mathbb{J}(x) = \pi^x \hat{\mathbb{J}}(x,y) = \{x \mid \exists y \in Y: (x, y) \in \hat{\mathbb{J}}\}.
\end{equation}

\subsection{Problem Statement}
A convex combination cost $f_\alpha$ of basis functions from $\fs = \{ f_i(x) \}_{i=1}^m$ given weights $\alpha$ may be expressed as,
\begin{align}
    f_\alpha(x) &= \textstyle \sum_{j=1}^m \alpha_j f_j(x) & \alpha \in \Delta^m \label{eq:cost_construction}
\end{align}
where $\Delta^m$ is the $m$-dimensional probability simplices $\Delta^m = \{\alpha \in \R^m \mid \alpha \geq 0, \sum_i \alpha_i = 1\}.$
This paper attempts to answer the following two problems:

\begin{prob}[Feasibility]
\label{prob:feas}
Does there exist an $\alpha \in \Delta^m$ such that
 $y$ is a global optimum?
\begin{align}
    \find_{\alpha \in \Delta^m} & \ y \in \textstyle \argmin_{x \in X} \sum_{j=1} \alpha_j f_j(x)
\end{align}
\end{prob}

\begin{prob}[\ac{pgm}]
\label{prob:pgm}
What is the distance from $y$ to the set of global optima?
\begin{subequations}
\label{eq:pgm}
\begin{align}
    p^* = \min_{x \in X} & \ \norm{y-x}_2^2 \\ 
    \exists \alpha \in \Delta^m & \mid x \in \textstyle \argmin_{x' \in X} \sum_{j=1} \alpha_j f_j(x')
\end{align}
\end{subequations}
\end{prob}

If $\alpha$ solves the feasibility program \ref{prob:feas}, then the objective from problem \ref{prob:pgm} will necessarily be $p^* = 0$ with $x = y$.
Both problems will be treated under the following assumption (the same setting as used in \cite{keshavarz2011imputing}),
\begin{assum}[Convexity]
\label{assum:convex_weak}
Each $f_j$ is a $C^1$ convex function and $X$ is a convex set.
\end{assum}

Problem \ref{prob:feas} under Assumption \ref{assum:convex_weak} may be decided by analyzing the feasibility of a simple $n$-dimensional \ac{LP} in $\alpha$. 
Problem \ref{prob:pgm} is a bilevel optimization problem that assesses the quality of the basis $\fs$ from \eqref{eq:cost_construction} in characterizing $y$ as an optimal point. A large distance $p^*$ from Program \eqref{eq:pgm} indicates that the  basis $\fs$ does not accurately describe $y$ as a global minimum and therefore $\fs$ should be redesigned.
Problem \ref{prob:pgm} is a generically nonconvex and nontrivial problem even under Assumption \ref{assum:convex_weak}.
Local search and trust-region methods may be used to find upper bounds for $p^*$, but exact computation of $p^*$ is typically intractable. \Iac{LMI} is introduced to obtain lower bounds for $p^*$, and this \ac{LMI} relaxation is tight to $p^*$ if the \ac{PSD} matrix variable is rank-1.
\begin{figure}[h]
    \centering
    \includegraphics[width=0.75\linewidth]{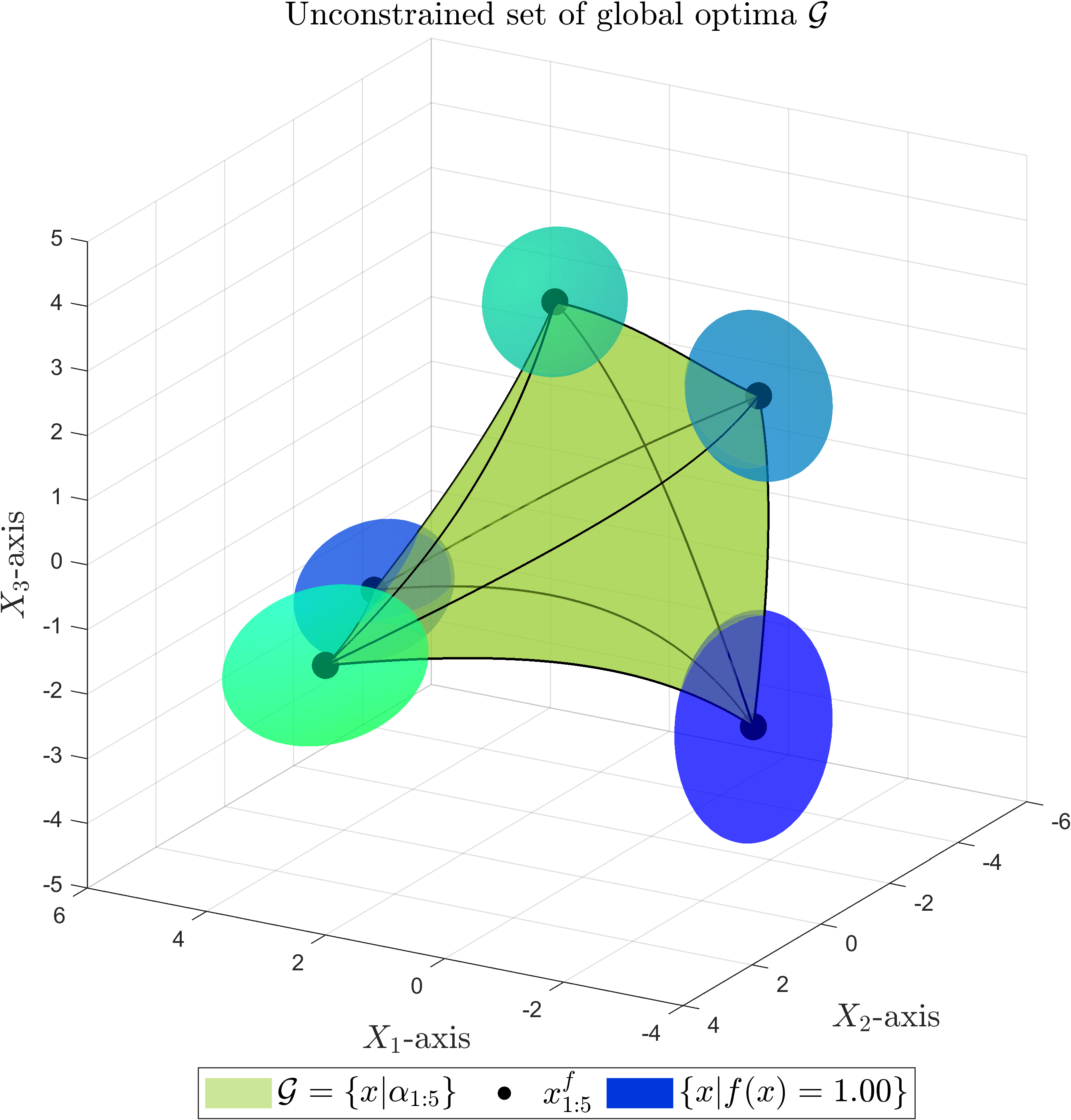}
    \caption{A set of unconstrained global optima in 3D}
    \label{fig:uncons3d}
\end{figure}

\section{Geometry of Unconstrained Optimal Sets}
\label{sec:geometry_uncons}

This section formulates and defines properties of sets of global optima when $X = \R^n$. Due to repeated usage throughout this section, the symbol $\ws$ will denote $\R^n \times \Delta^m$.

\subsection{Convex Case}
The set of global optima and weights associated with a $\fs$ yielding mixed cost functions \eqref{eq:cost_construction} given Assumption  \ref{assum:convex_weak} is,
\begin{align}
    \gh &= \{(x, \alpha) \in \ws \mid \nabla_x f_\alpha(x) = 0\}.
\end{align}

Convexity of $f_\alpha$ implies that every local minimizer with $\nabla_x f_\alpha(x)=0$ is also a global minimizer.

As condition $\nabla_x f_\alpha(y) = 0$ is linear in $\alpha$, the feasibility problem \ref{prob:feas} given $y \in \R^n$ may be posed as an \ac{LP} in $\alpha$, 
% The feasibility \ac{LP}
\begin{prob}[Unconstrained Feasibility]
\label{prob:feas_uncons}
\begin{align}
\label{eq:feas_uncons_lp}
    \textstyle \find_{\alpha \in \Delta^m} \ (y, \alpha) \in \gh.
\end{align}
\end{prob}

The unconstrained  \ac{pgm} problem for the same $y$ is,
\begin{prob}[Unconstrained \ac{pgm}]
\label{prob:pgm_uncons}
\begin{align}
    p^* &= \min_{(x, \alpha) \in \gh} \norm{y-x}_2^2. \label{eq:pgm_uncons}
\end{align}
\end{prob}

\subsection{Quadratics Only}
Further analysis in this section will require the following assumption,
\begin{assum}[Strictly Convex Quadratics]
\label{assum:quad_uncons}
Functions in $\fs$ are strictly convex quadratics with,
\begin{subequations}
\label{eq:quad_uncons}
\begin{align}
    f_j(x) &=  (x-x_j^f)^T Q_j (x-x_j^f)/2 & \forall j = 1..m \\
    \nabla_x f_j(x) &=  Q_j (x-x_j^f) & \forall j = 1..m.
\end{align}
\end{subequations}
where all elements in $(Q_j, x_j^f)$ are bounded and $Q_j \in \psd_{++}^n$ for all $j=1..m$.
\end{assum}

The optima-weight sets $\gh(x, \alpha)$ and $\gs(x)$ under Assumption \ref{assum:quad_uncons} is,
\begin{align}
\label{eq:gh_quad_uncons}
    \gh &= \textstyle \{(x, \alpha) \in \ws \mid \left(\sum_j \alpha_j Q_j \right)x= \sum_j \alpha_j Q_j x_j^f\} \\
    \gs &= \pi^x \gh.
\end{align}
\begin{rem}
Letting $\{e_j\}_{j=1}^m$ be the standard basis vectors in $\R^m$, the points $\{(x_j^f, e_j)\}_{j=1}^m$ are all members of $\gh$.
\end{rem}
\begin{rem}
All descriptor constraints in $\gh$ from \eqref{eq:gh_quad_uncons} have polynomial degree at most two, and there is a bilinearity between $(\alpha, x)$.
\end{rem}

The mapping $\kappa: \Delta^m \rightarrow \gs$ with  
$\kappa(\alpha) = x^*_\alpha$ in terms of finding an optimal $x$ minimizing $f_\alpha$ is single-valued, and has an expression,
\begin{equation}
\label{eq:opt_x_uncons}
\kappa(\alpha) = \textstyle x^*_\alpha = \left(\sum_j \alpha_j Q_j \right)^{-1}\left( \sum_j \alpha_j Q_j x_j^f\right).
\end{equation}
\begin{rem}
Conversely, the mapping $\kappa^{-1}: \gs \rightrightarrows \Delta^m$ is set-valued. 
Its values are the compact polytopic sets containing feasible points of program \eqref{eq:feas_uncons_lp}.
\end{rem}
\begin{thm}
\label{thm:cont_onto}
The map $\kappa(\alpha)$ from \eqref{eq:opt_x_uncons} is a continuous surjection from $\Delta^m$ onto $\gs$ under Assumption \ref{assum:quad_uncons}.
\end{thm}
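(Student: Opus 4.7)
The plan is to verify three facts in turn: that $\kappa$ is well-defined on $\Delta^m$, that it is continuous and maps into $\gs$, and that every element of $\gs$ is actually attained.

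First, for well-definedness of the formula \eqref{eq:opt_x_uncons}, I would argue that $M(\alpha) := \sum_j \alpha_j Q_j$ is positive definite, hence invertible, for every $\alpha \in \Delta^m$. Since $\alpha \in \Delta^m$ forces at least one index $k$ with $\alpha_k > 0$, and each $Q_j \in \psd^n_{++}$ by Assumption \ref{assum:quad_uncons}, any nonzero $v \in \R^n$ satisfies $v^T M(\alpha) v = \sum_j \alpha_j (v^T Q_j v) \geq \alpha_k (v^T Q_k v) > 0$. Thus $M(\alpha)^{-1}$ is defined on all of $\Delta^m$, making the closed form for $\kappa(\alpha)$ unambiguous.

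Continuity then follows by composition. The maps $\alpha \mapsto M(\alpha)$ and $\alpha \mapsto \sum_j \alpha_j Q_j x_j^f$ are linear (hence continuous) in $\alpha$, and matrix inversion is continuous on the open set of invertible matrices (via Cramer's rule, say). So $\alpha \mapsto M(\alpha)^{-1}$ is continuous on $\Delta^m$, and the product of this continuous matrix-valued map with the continuous vector-valued map $\alpha \mapsto \sum_j \alpha_j Q_j x_j^f$ is continuous.

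For the codomain and surjectivity, I would exploit the definition $\gs = \pi^x \gh$ directly. On one hand, $\kappa(\alpha) = x^*_\alpha$ satisfies the defining relation $M(\alpha)\,x^*_\alpha = \sum_j \alpha_j Q_j x_j^f$ by construction, so $(x^*_\alpha, \alpha) \in \gh$ and thus $\kappa(\alpha) \in \gs$. Conversely, given $x \in \gs$, unfolding the projection yields some $\alpha \in \Delta^m$ with $(x, \alpha) \in \gh$, i.e., $M(\alpha) x = \sum_j \alpha_j Q_j x_j^f$; premultiplying by $M(\alpha)^{-1}$ gives $x = \kappa(\alpha)$. The only real obstacle is the uniform invertibility of $M(\alpha)$ across the simplex, which is exactly where strict (rather than merely weak) positive definiteness of the $Q_j$ from Assumption \ref{assum:quad_uncons} is essential; once that is secured, continuity and surjectivity follow almost for free from the closed-form expression and the definition of $\gs$ as the $x$-projection of $\gh$.
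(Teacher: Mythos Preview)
Your proposal is correct and follows essentially the same approach as the paper: surjectivity is read off directly from the definition $\gs = \pi^x \gh$, and continuity is obtained from continuity of matrix inversion on nonsingular matrices. Your version is simply more explicit, spelling out the well-definedness step (uniform invertibility of $M(\alpha)$ over $\Delta^m$ via strict positive definiteness of the $Q_j$) that the paper leaves implicit.
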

\begin{proof}
Surjection holds by definition of $\gh$ in \eqref{eq:gh_quad_uncons}: $x$ is only a member of $\gh$ if there exists an $\alpha'\in\Delta^m$ such that $x = x^*_{\alpha'}$. Continuity of $x_\alpha^*$ is based on continuity of the matrix inverse $A^{-1}$ for all nonsingular matrices $A \in \R^{n\times n}$.
\end{proof}

\begin{defn}[Compact] A set $X \in \R^n$ is \textit{compact} if it is closed and bounded (Heine-Borel). A consequence is that $X$ is compact if there exists a finite $R > 0$ such that $X$ is a subset of the ball with radius $R: \ \{x \mid \norm{x}_2 < R\}$ \cite{munkres2000topology}.
\end{defn}
\begin{thm}
\label{thm:compact}
Under Assumption \ref{assum:quad_uncons}, the set $\gh$ is compact.
\end{thm}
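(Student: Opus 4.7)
The plan is to invoke the Heine--Borel characterization stated in the preceding Definition and show that $\gh$ is both closed and bounded. Since $\gh$ lives in $\R^n \times \R^m$, it suffices to bound each coordinate block and confirm the defining locus is a closed subset of $\ws$.

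For closedness, I would rewrite the descriptor constraint in \eqref{eq:gh_quad_uncons} as $F(x,\alpha) := \bigl(\sum_j \alpha_j Q_j\bigr)x - \sum_j \alpha_j Q_j x_j^f = 0$. Each entry of $F$ is a polynomial (in fact bilinear) in $(x,\alpha)$ and therefore continuous, so $F^{-1}(\{0\})$ is a closed subset of $\R^n \times \R^m$. Intersecting with $\ws = \R^n \times \Delta^m$, which is closed because $\Delta^m$ is cut out by a single linear equality and finitely many nonstrict linear inequalities, keeps the set closed.

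For boundedness, the $\alpha$-coordinate is trivially bounded because $\Delta^m \subset [0,1]^m$. For the $x$-coordinate, I would push the analysis through Theorem \ref{thm:cont_onto}: any $(x,\alpha)\in\gh$ satisfies $x = \kappa(\alpha)$ with $\kappa$ continuous on $\Delta^m$, and $\Delta^m$ is closed and bounded in $\R^m$, hence compact. The continuous image of a compact set is compact, so $\gs = \kappa(\Delta^m)$ is compact and in particular bounded by some radius $R>0$. Therefore every $(x,\alpha)\in\gh$ satisfies $\norm{x}_2 \le R$ and $\norm{\alpha}_2 \le 1$, giving a joint bound.

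The only point requiring care, and the main obstacle if one tries to do this directly without invoking Theorem \ref{thm:cont_onto}, is verifying that $\kappa(\alpha)$ is well-defined on the entire simplex; equivalently, that $\sum_j \alpha_j Q_j$ is invertible for every $\alpha\in\Delta^m$. This is exactly where the strict convexity hypothesis in Assumption \ref{assum:quad_uncons} pays off: for any $v\neq 0$ one has $v^T\!\bigl(\sum_j \alpha_j Q_j\bigr)v = \sum_j \alpha_j\, v^T Q_j v > 0$, since each $Q_j \in \psd^n_{++}$ and at least one $\alpha_j > 0$. With $\kappa$ well-defined and continuous, combining closedness and boundedness yields compactness of $\gh$ by Heine--Borel.
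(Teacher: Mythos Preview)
Your argument is correct, and in fact you are more careful than the paper about the closedness half of Heine--Borel: the paper only exhibits a compact superset $\{(x,\alpha)\in\ws\mid\norm{x}_2\le R/\Lambda\}$ and declares $\gh$ compact, tacitly relying on $\gh$ being closed, whereas you spell out that $\gh$ is the zero set of a continuous (bilinear) map intersected with the closed simplex.

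The route differs in how boundedness of the $x$-block is obtained. The paper argues quantitatively: it sets $\Lambda=\min_{\alpha\in\Delta^m}\lambda_{\min}\bigl(\sum_j\alpha_jQ_j\bigr)>0$, bounds $\norm{\kappa(\alpha)}_2$ by $(1/\Lambda)\norm{\sum_j\alpha_jQ_jx_j^f}_2$, and extracts an explicit radius. You instead leverage Theorem~\ref{thm:cont_onto} directly and use that the continuous image of the compact simplex is compact. Your approach is shorter and avoids manipulating operator norms; the paper's approach has the advantage of producing a concrete radius (useful later when the authors want an Archimedean certificate for the moment--SOS hierarchy in Section~\ref{sec:extensions}). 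Both ultimately hinge on the same fact you isolate at the end, namely that $\sum_j\alpha_jQ_j\in\psd^n_{++}$ for every $\alpha\in\Delta^m$, which is what makes $\kappa$ well-defined and continuous in your version and what makes $\Lambda>0$ in theirs.
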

\begin{proof}
There exists a finite quantity $R > 0$ that satisfies $\max_j \norm{x^f_j}_2 < R$ due to boundedness of $x^f$. It is implied that $\max_{\alpha \in \Delta^m} \norm{\sum_{j=1}^m \alpha_j x^f_j}_2 < R$ by convexity of the norm $\norm{\cdot}_2$. Let $\Lambda$ be the solution to,
\begin{equation}
    \Lambda = \min_{\alpha \in \Delta^m} \lambda_{min}\left( \textstyle \sum_j \alpha_j Q_j\right).
\end{equation}
It holds that $\Lambda > 0$ because \ac{PD} matrices form a convex (non-pointed) cone. The maximum norm of any global-optimal point in $\gs$ is bounded above by,
\begin{subequations}
\begin{align}
    \norm{\kappa(\alpha)}_2 &= \textstyle \norm{\left(\sum_j \alpha_j Q_j \right)^{-1}\left( \sum_j \alpha_j Q_j x_j^f\right)}_2 \\
    &\textstyle \leq (1/\Lambda) \norm{\left( \sum_j \alpha_j Q_j x_j^f\right)}_2 \leq R / \Lambda < \infty.
\end{align}
\end{subequations}

The compact set $\{(x, \alpha) \in \ws \mid \norm{x}_2 \leq R/\Lambda\}$ is a superset of $\gh$ from \eqref{eq:gh_quad_uncons}, which proves that $\gh$ is compact.
\end{proof}

\begin{defn}[Path-Connected]A set $X$ is path-connected if for every two points $x^0, x^1 \in X$ there exists a continuous path (curve) $\omega: [0, 1] \rightarrow X$ with $\omega(0) = x^0, \ \omega(1) = x^1$ such that $\omega(t) \in X \ \forall t \in [0, 1]$ \cite{munkres2000topology}.
\end{defn}
\begin{thm}
\label{thm:path_quad}
Under Assumption \ref{assum:quad_uncons}, the set $\gs$ is path-connected.
\end{thm}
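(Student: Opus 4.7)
The plan is to leverage Theorem \ref{thm:cont_onto}, which already establishes that $\kappa:\Delta^m\to\gs$ is a continuous surjection, together with the elementary topological fact that the continuous image of a path-connected set is path-connected. Since $\Delta^m$ is convex (a probability simplex in $\R^m$), it is automatically path-connected via straight-line segments, and so $\gs=\kappa(\Delta^m)$ inherits path-connectedness.

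Concretely, I would proceed as follows. Given any two points $x^0,x^1\in\gs$, surjectivity of $\kappa$ yields weights $\alpha^0,\alpha^1\in\Delta^m$ with $\kappa(\alpha^i)=x^i$ for $i=0,1$. Define the straight-line homotopy $\alpha(t)=(1-t)\alpha^0+t\alpha^1$ for $t\in[0,1]$. Because $\Delta^m$ is convex, $\alpha(t)\in\Delta^m$ for every $t\in[0,1]$, so the curve
\begin{equation}
    \omega(t)=\kappa(\alpha(t))=\Bigl(\textstyle\sum_j \alpha_j(t) Q_j\Bigr)^{-1}\Bigl(\sum_j \alpha_j(t) Q_j x_j^f\Bigr)
\end{equation}
is well-defined and lies entirely in $\gs$. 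Continuity of $\omega$ follows from composition of the continuous affine map $t\mapsto\alpha(t)$ with the continuous map $\kappa$ guaranteed by Theorem \ref{thm:cont_onto}. The endpoint conditions $\omega(0)=x^0$ and $\omega(1)=x^1$ hold by construction, verifying the definition of path-connectedness.

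There is essentially no obstacle here beyond carefully invoking Theorem \ref{thm:cont_onto}: the result is a direct corollary of continuity and surjectivity of $\kappa$ together with convexity of $\Delta^m$. The only subtlety worth noting is that one must ensure $\sum_j\alpha_j(t)Q_j$ remains nonsingular along the entire path, but this is immediate since $\alpha(t)\in\Delta^m$ and each $Q_j\in\psd_{++}^n$ guarantees $\sum_j \alpha_j(t)Q_j\in\psd_{++}^n$ by the convex-cone argument used in the proof of Theorem \ref{thm:compact}.
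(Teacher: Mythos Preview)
Your proof is correct and follows essentially the same route as the paper: pick preimages $\alpha^0,\alpha^1\in\Delta^m$ via surjectivity of $\kappa$, connect them by the straight segment in the convex simplex, and push this segment forward through the continuous map $\kappa$ (Theorem~\ref{thm:cont_onto}) to obtain a path in $\gs$. Your version is slightly more explicit about why $\kappa$ stays well-defined along the path, but the argument is otherwise identical.
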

\begin{proof}
Let $x^0, x^1$ be a pair of distinct points in $\gs$. Choose $\alpha^0 \in \kappa^{-1}(x^0)$ and $\alpha^1 \in \kappa^{-1}(x^1)$ as weights generating the optimal points $x^0, x^1$. 
A path $\omega:[0, 1]\rightarrow \Delta^m$ may be drawn between the points by $\omega(t): \alpha^0 t + \alpha^1(1-t)$. The path $(\omega(t), \kappa(\omega(t)) )$ remains inside $\gh$ for all $t \in [0, 1]$ by continuity of $\kappa$ from Theorem \ref{thm:cont_onto}. This containment holds for all pairs $(x^0, x^1)$, so $\gs$ is path-connected.
\end{proof}
\begin{rem}
No conclusions can be drawn in this manner about path-connectedness of $\gh$.
\end{rem}

Figure \ref{fig:uncons3d} depicts the set of unconstrained global minima $\gs$ for $x \in \R^3$, generated by $5$ different basis functions, thus $\alpha \in \Delta^5$. Optimal points of the individual basis functions, as well as the $1$-level-sets of the quadratics are plotted, in order to give a sense of pairs $(Q_j, x^f_j)_{j=1}^5$ involved. The black edges connecting pairs of $x^f_j$ correspond to edges of the geometric shape, but also to sets of global minima of pairwise function combinations. Compactness and path-connectedness are visually obvious for this example.
\section{Geometry of Constrained Optimal Sets} \label{sec:geometry_cons}
This section will extend \ref{sec:geometry_uncons} to consider the case when $X \subsetneq \R^n$ is a convex constraint set. 

\subsection{Convex Case}
The following assumption and representation is required,
\begin{assum}
\label{assum:con_rep}
There exists matrices $A_{eq} \in \R^{q \times n}, \ b_{eq} \in \R^{q}$ and $C^1$ convex functions $\{g_k(x)\}_{k=1}^r$ such that,
\begin{equation}
    X = \{x \in \R^n \mid A_{eq} x = b_{eq}, \ g_k(x) \leq 0 \  \forall k = 1..r\}.
\end{equation}
\end{assum}
\begin{assum}[Slater's Condition]
\label{assum:slater}
There exists a point $x'\in \R^n$ such that $A_{eq} x' = b$ and $g_k(x') < 0 \ \forall k = 1..n$, meaning $X$ is non-empty.
\end{assum}

Define $\mu \in \R_+^r$ and $\lambda \in \R^q$ as dual variables against the inequality and equality constraints describing $X$ respectively. The \ac{KKT} necessary conditions are sufficient to classify all optimal (minimizer) points of $f_\alpha(x)$ given a weighting $\alpha \in \Delta^n$ and assumptions \ref{assum:convex_weak}, \ref{assum:con_rep}, \ref{assum:slater} \cite{boyd2004convex}:
\begin{subequations}
\label{eq:kkt_cons}
\begin{align}
    &\textstyle \nabla_x f_\alpha(x) + A_{eq}^T \lambda + \sum_{k=1}^r \mu_k \nabla_x g_k(x) = 0\\
    & A_{eq} x = b \\
    & g_k(x) \leq 0, \ \mu_k \geq 0 \qquad \forall k = 1..n\\
    & \textstyle \sum_{k} \mu_k g_k(x) = 0.
\end{align}
\end{subequations}

Define the symbol $\ws^c = \R^n \times \Delta^n \times \R_+^r \times \R^q$ as the resident set containing $(x, \alpha, \mu, \lambda)$.

The optima-weight set in the constrained case is,
\begin{align}
\label{eq:gh_cons}
    \gh^c &= \{(x, \alpha, \mu, \lambda) \in \hat{\ws} \mid \text{\ac{KKT} conditions \eqref{eq:kkt_cons} hold}\} \\
    \gs^c &= \pi^x \gh^c.
\end{align}

The feasibility \ac{LP} to check if a $y \in \R^n$ is constrained-optimal (similar to \eqref{eq:feas_uncons_lp} for the unconstrained case) is,
\begin{prob}[Constrained Feasibility]
\label{prob:feas_cons}
\begin{align}
\label{eq:feas_cons_lp}
    \textstyle \find_{\alpha \in \Delta^m, \ \mu \in \R_+^r, \ \lambda \in  \R^q} \ (y, \alpha, \mu, \lambda) \in \gh^c,
\end{align}
\end{prob}
with a constrained \ac{pgm} program,
\begin{prob}[Constrained \ac{pgm}]
\label{prob:pgm_cons}
\begin{align}
    p^* = \min_{(x, \alpha, \mu, \lambda) \in \gh^c} \norm{y-x}_2^2. \label{eq:pgm_cons}
\end{align}
\end{prob}

\subsection{Quadratic Programming} \label{sec:qp}
We note that the special case of \ac{QP} involves candidate functions and a constraint set,
\begin{subequations}
\label{eq:qp_param}
\begin{align}
    f_j &= x^T Q_j x/2 + {\varphi_j}^T x, \ \forall j =1..m \label{eq:qp_objective} \\
    X &= \{x \in \R^n \mid A_{eq} x = b_{eq}, \ A x \leq b\}, \label{eq:qp_X}
\end{align}
\end{subequations}for matrices $\{Q_j \in \psd_+^{n}, \ \varphi_j \in \R^n\}_{j=1..m}$ and $A_{eq} \in \R^{q \times n}, \  b_{eq} \in \R^q, \ A\in \R^{r \times n}, \ b \in \R^r$ such that Assumption \ref{assum:slater} (Slater) holds.

The constrained-optimal solution map given $\alpha$ and the parameters in \eqref{eq:qp_param} is,
\begin{equation}
\label{eq:qp_argmin}
    \kappa^c(\alpha) = \textstyle \argmin_{x \in X} \sum_{j=1}^m \alpha_j f_j(x).
\end{equation}

\begin{rem}
Due to possible weak convexity of some cost functions in $\fs$, there may exist points $\alpha \in \Delta^m$ such that $\kappa^c(\alpha)$ is set-valued rather than single-valued. This will occur when $Q_\alpha = (\sum_{j=1}^m \alpha_j Q_j)$ is rank-deficient and $(\sum_{j=1}^m \alpha_j \varphi_j)$ is orthogonal to $Q_\alpha$'s nullspace.
\end{rem}

In this case, the possibly discontinuous selection (translation of the minimum map),
\begin{equation}
\label{eq:y_selection}
    s(\kappa^c(\alpha)) = \argmin_{x \in \kappa^c(\alpha)} \norm{y-x}_2^2,
\end{equation} will denote a constrained-optimal point in $\kappa^c(\alpha)$  that is closest to $y$. Finding a selection $s(\kappa^c(\alpha))$ requires solving a second \ac{QP} over the $X$-intersected subspace of solutions of \eqref{eq:qp_argmin}. This two-step approach involving a minimal selection was also performed in \cite{spjotvold2007continuous}.

Figure \ref{fig:set_containment} depicts constrained global optima $\gs^c$, and it's unconstrained version $\gs$ generated from the same basis function set $\fs$. 

Here $2$ variables, $3$ cost functions, and $4$ inequality constraints with $(x, \alpha, \mu) \in \R^2 \times \Delta^3 \times \R^4_+$ are considered.
Unconstrained and constrained optimal points of the individual basis functions, $x^f_{1:5}$ and $x^{f, c}_{1:5}$, are shown as colored dots. The black square denotes the feasible region. The blue ellipses are the level sets of the quadratic cost functions, chosen to highlight the spots where they are tangential to the constrained set boundary (implying the position of their constrained minima). The red box highlights the spot where the intersection of $\gs^c$ and the complement of $\gs$ is non-empty. 

\begin{figure}[h]
    \centering
    \includegraphics[width=0.75\linewidth]{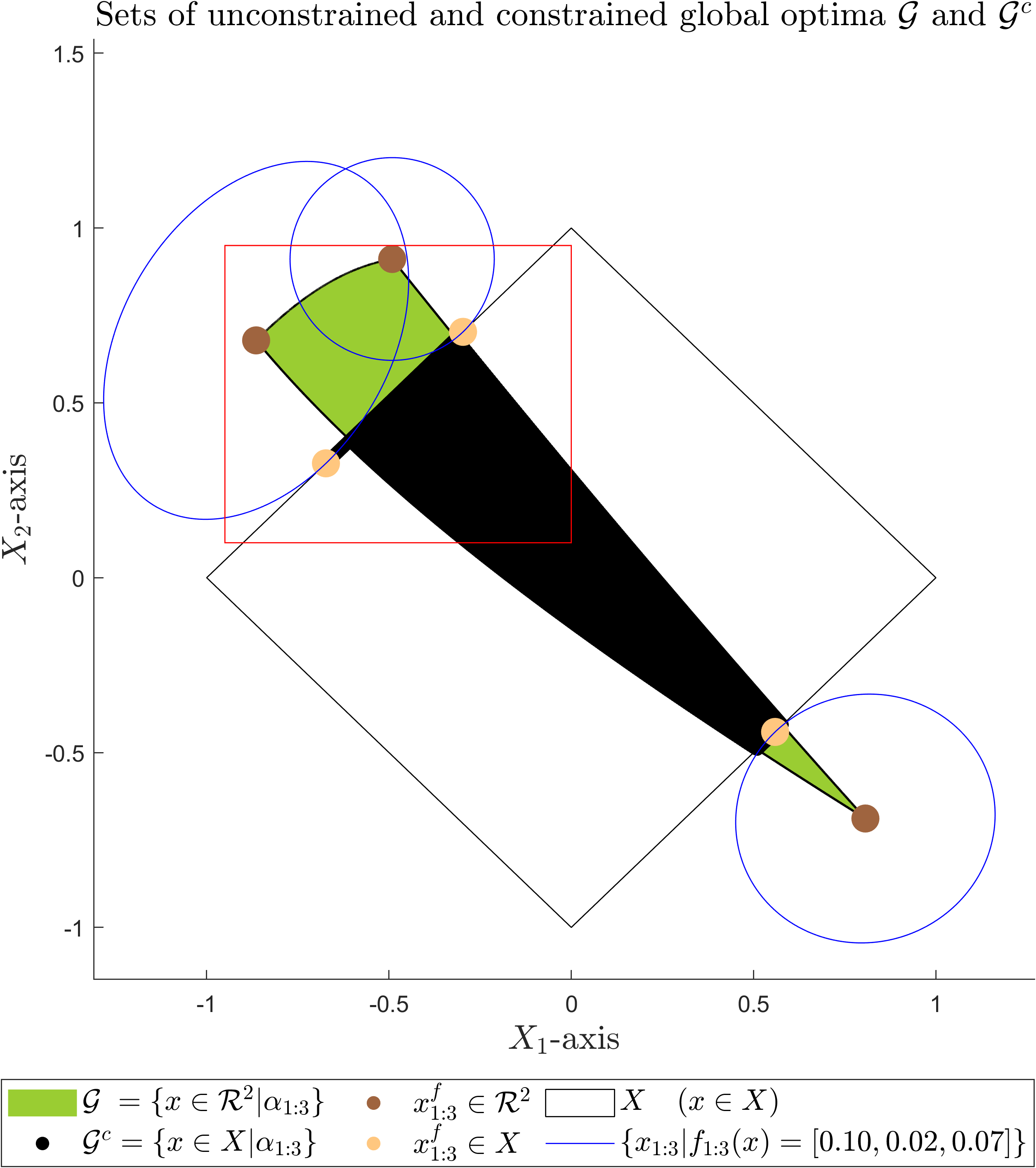}
    \caption{The set $\gs^c$ is not necessarily a subset of $\gs$}
    \label{fig:set_containment}
\end{figure}
\section{Numerical Methods} \label{sec:numerical_methods}

This section will present numerical approaches to find upper and lower bounds for the Unconstrained \ac{pgm} \ref{prob:pgm_uncons} and Constrained \ac{pgm} \ref{prob:pgm_cons}.

\subsection{Upper Bounds}
\label{sec:upper_bounds}

Upper bounds of $p^*$ may be computed through sampling and local search optimization. The \ac{pgm} problems may be formulated solely in terms of $\alpha$ through the use of the optimization maps $\kappa(\alpha)$ and $\kappa^c(\alpha)$ for $\alpha \in \Delta^*$ (bi-level optimization).

The objective in \ref{prob:pgm_uncons} and its gradient with respect to $\alpha$ is,

\begin{align}
    F(\alpha) &= \norm{\kappa(\alpha) - y}_2^2 \label{eq:cost_uncons}\\
    \nabla_\alpha F(\alpha)_j &= \textstyle2 (\kappa(\alpha) - y)^T \left( \sum_j \alpha_j Q_j\right)^{-1} Q_j (x_j^f - \kappa(\alpha)).\nonumber
\end{align}

A Hessian for $F$ may be similarly derived in closed form (omitted due to space constraints). Local solvers such as \texttt{fmincon} in MATLAB or Manopt\footnote{As $\Delta^m$ is not a manifold, the Hadamard parameterization $\Delta^m = \{z \odot z \mid \norm{z}_2^2 = 1\}$ may be employed \cite{mckenzie2021hadamard}.} \cite{manopt} may be given this value and derivative information to perform optimization over $\alpha \in \Delta^m$ (e.g. L-BFGS, trust-region). 

The constrained objective in \ref{prob:pgm_cons} may be expressed as,
\begin{equation}
\label{eq:cost_cons}
    F^c(\alpha) = \norm{s(\kappa^c(\alpha))-y}_2^2.
\end{equation}

The constrained objective \eqref{eq:cost_cons} is not generally differentiable with only the weak convexity assumption. Two possible options to minimize $F^c(\alpha)$ include gridding $\Delta^m$, and using \texttt{fmincon} in terms of $(x,\alpha, \mu, \lambda) \in \gh^c$ directly on the constrained optimization problem \ref{prob:pgm_cons}.

\subsection{Lower Bounds}
\label{sec:lower_bounds}
Lower bounds $p_{low} \leq p^*$ to problems \ref{prob:pgm_uncons} and \ref{prob:pgm_cons} in the quadratic case may be acquired through semidefinite programming. The presented method in this subsection is a \ac{QCQP} that is equivalent to the degree-1 moment-\ac{SOS} hierarchy \ac{LMI} enriched by additional constraints. 

\subsubsection{Unconstrained Lower Bound}
Every point $(x, \alpha) \in \ws$ defines a rank-1 \ac{PSD} matrix $M = [1 \ x \ \alpha] [1 \ x \ \alpha]^T$. Setting $M \in \psd^{1+n+m}_+ $ indexed by $(1, x, \alpha)$ as a matrix variable,
\begin{equation}
\label{eq:sdp_var_uncons}
    M = \begin{bmatrix}M_{11} &M_{1x} & M_{1\alpha} \\
    M_{x1} & M_{xx} & M_{x \alpha} \\
    M_{\alpha 1} &M_{\alpha x} & M_{\alpha \alpha}\end{bmatrix},
\end{equation}
the objective $\norm{y-x}_2^2$ may be converted into an affine expression, 
\begin{equation}
\label{eq:obj_affine}
    \textstyle \sum_{i=1}^n \left(M_{x_i x_i} - 2 y_i M_{1 x_i}\right) + \norm{y}^2_2
\end{equation}

Containment in $\gh$ from \eqref{eq:gh_quad_uncons} may be expressed as,
\begin{subequations}
\label{eq:uncons_grad}
\begin{align}
    & \textstyle \sum_{j=1}^m M_{\alpha_j 1} = 1 \label{eq:alpha_sdp_top}\\
    & M_{\alpha_j 1} \geq 0 \qquad \forall j = 1..m \label{eq:alpha_sdp_bot}\\
    & \textstyle \sum_{j=1}^m Q_j M_{x \alpha_j} - (Q_j x_j^f) M_{1 \alpha_j} = 0.
&\end{align}
\end{subequations}

Valid constraints are a set of relations that are always satisfied by an $M$ generated by the optimal point $(x^*, \alpha^*)$ solving Problem \ref{prob:pgm_uncons}. These valid constraints include,
\begin{subequations}
\label{eq:valid_uncons}
\begin{align}
    M_{\alpha_i 1} &= \textstyle \sum_{j=1}^m M_{\alpha_i \alpha_j} & & \forall i \neq j\label{eq:valid_uncons_top}\\
    M_{\alpha_i \alpha_j} &\geq 0 & & \forall i \neq j \label{eq:valid_uncons_bot}\\
    M_{\alpha_i \alpha_j} &\leq \alpha_i, \ M_{\alpha_i \alpha_j} \leq \alpha_j & & \forall i \neq j \label{eq:valid_uncons_simp0} \\
    M_{\alpha_i \alpha_i} &\leq M_{1 \alpha_i} & &  \forall i = 1..m \label{eq:valid_uncons_simp}\\
    M_{\alpha_i \alpha_j}
    &\leq 1/4 & & \forall i \neq j. \label{eq:valid_uncons_sym}
\end{align}
\end{subequations}

Constraints \eqref{eq:valid_uncons_top}-\eqref{eq:valid_uncons_bot} arise from multiplying together defining constraints for the simplex. The diagonal entry $M_{\alpha_i \alpha_i}$ will automatically be positive by  $M \in  \psd^{1+n+m}_+$. Constraints \eqref{eq:valid_uncons_simp0}-\eqref{eq:valid_uncons_sym} originate from observations about the simplex $\Delta^m$. Because $\alpha \in \Delta^m \subsetneq [0, 1]^m$, coordinate-wise multiplication will satisfy $\alpha_i \alpha_j \leq \alpha_i$ for all $i, j \in 1..n$. 

Constraint \eqref{eq:valid_uncons_sym} results from the fact that 
elementary symmetric polynomials on the probability simplex $\Delta^{\sigma}$ are maximized at the vector $\mathbf{1}/\sigma$. The elementary symmetric polynomial applied to $\alpha' \in \Delta^{\sigma}$ is $e_2^{\sigma}(\alpha') = \sum_{1 \leq i < j \leq n} \alpha'_i \alpha'_j$, and admits the maximum value of $\max e_2^{\sigma} = \binom{\sigma}{2}\frac{1}{\sigma^2}$. The valid inequality derived from  $\sigma=2$ with $\max e_2^{\sigma} = \frac{1}{4}$ is written in \eqref{eq:valid_uncons_sym}. Valid inequalities with higher $\sigma$ may be written at the cost of including a combinatorially increasing number of constraints.

\SetKwInOut{Input}{Output}
\begin{algorithm}[h]
\caption{Unconstrained \ac{LMI}}\label{alg:uncons_lmi}
\KwIn{$y, \ Q, \ x_f$}
\KwOut{ $p^*_{low}, \ M$ (or Infeasibility)}
Solve (or find infeasibility certificate):
\begin{subequations}
\label{eq:uncons_sdp}
\begin{align}
p^*_{low} = & \min_{M} \quad  \text{Objective \eqref{eq:obj_affine}}\\
& \text{Optimality \eqref{eq:uncons_grad},  Valid \eqref{eq:valid_uncons}} \\
& M_{11} = 1, \ M \in \psd^{1+n+m}_+.
\end{align}
\end{subequations}
\end{algorithm}

A rank-1 matrix solution $M$ of Algorithm \ref{alg:uncons_lmi} certifies that $p^*_{low} = p^*$. The optimal entries $(x, \ \alpha)$ can then be read from the solution's entries $(M_{1x}, \ M_{1 \alpha})$. Adding valid inequalities \eqref{eq:valid_uncons} can encourage rank-1 solutions of \ac{LMI} lower bound problem, refer to \cite{ahmadi2021semidefinite} for further examples of this phenomenon.

\subsubsection{Constrained Lower Bounds}
The lower bound SDP for the \ac{QP} setting in Section \ref{sec:qp} requires a matrix $M \in \psd_+^{1+n+m+r}$ and a vector $\lambda \in \R^{q}$ such that the entries of $M$ are indexed by $[1, x, \alpha, \mu]$. The equality multipliers $\lambda$ may be ommited from $M$ because there is no multiplication in \eqref{eq:kkt_cons} between terms that contain $x$ and $\lambda$.
The affine constraint interpretation of the \ac{KKT} conditions in \eqref{eq:kkt_cons} is,
\begin{subequations}
\label{eq:kkt_sdp}
\begin{align}
    & \textstyle \sum_{j=1}^m (Q_j M_{x \alpha_j} + \varphi_j M_{\alpha_j 1}) + A^T_{eq} \lambda + A^T M_{\mu 1}= 0 \\
    & A_{eq} M_{x 1} = b_{eq} \\
    & A M_{x 1} \leq b  \quad M_{\mu 1} \geq 0\\
    & -b^T M_{\mu 1} + \textstyle \sum_{k=1}^r A_{k} M_{x \mu_k} = 0.
    \label{eq:kkt_qp_slackness}
\end{align}
\end{subequations}

The notation $A_k$ in the complementary slackness constraint \eqref{eq:kkt_qp_slackness} indicates row $k$ of the matrix $A$.
Valid inequalities for the constrained \ac{QP} case include (with inequality constraint indices $(k, \ell)$),
\begin{subequations}
\label{eq:valid_qp}
\begin{align}
    M_{\mu_k \mu_\ell} &\geq 0 & & \forall k \neq \ell \\
    M_{\mu_k \alpha_i} &\geq 0 & & \forall k=1..r, \ i=1..m. \\
    M_{\mu_k \alpha_i} &\leq M_{\mu_k 1} & & \forall k=1..r, \ i=1..m.
\end{align}
\end{subequations}

Both $\alpha \in \Delta^m$ and $\mu \in \R_+^r$ are nonnegative, so their multiplications should also be nonnegative. The \ac{QP}-lower-bounding \ac{LMI} is,

\begin{algorithm}[h]
\caption{Constrained \ac{LMI}}\label{alg:cons_lmi}
\KwIn{$y, \ Q, \ \varphi, \ A, \ b, \ A_{eq}, \ b_{eq}$}
\KwOut{ $p^*_{low}, \ M$ (or Infeasibility)}
Solve (or find infeasibility certificate):
\begin{subequations}
\label{eq:cons_sdp}
\begin{align}
p^*_{low} = & \min_{\lambda \in \R^q, \ M} \quad  \text{Objective \eqref{eq:obj_affine}}\\
& \text{Simplex \eqref{eq:alpha_sdp_top}-\eqref{eq:alpha_sdp_bot}, \ KKT \eqref{eq:kkt_sdp}}   \\
& \text{Valid \eqref{eq:valid_uncons}, \eqref{eq:valid_qp}} \\
& M_{11} = 1, \ M \in \psd^{1+n+m+r}_+
\end{align}
\end{subequations}
\end{algorithm}
The accuracy of $p^*_{low} \leq p^*$ from Algorithm \ref{alg:cons_lmi} may be improved if an upper bound for $\mu_k^2$ $(M_{\mu_k \mu_k})$ was known for each $k$. Bisection-based approaches with convex cost heuristics may also be applied to the presented \acp{LMI} \cite{mohan2012iterative}.
\section{Numerical Examples} \label{sec:examples}
This section contains small-scale toy examples with visuals, to develop intuition. The examples will showcase flaws in the approximate projection method \cite{keshavarz2011imputing} when an inconsistent basis is supposed, and the robustness of the bi-level method. The proposed distance-bounding methods will also be displayed. Future works will showcase the method for larger-scale examples and will be applied to real data.

Assume that an observed human decision (or trajectory) can be represented by a vector $y \in \R^2$, or by a vector $y^c \in X \subset R^3$, and that it is generated via an unknown process (as is actually the case in real-world applications). Assume bases $\fs$ and $\fs^c$ contain $5$ and $3$ quadratic cost functions respectively, believed to be underlying the decision-making (or trajectory-generation). Figure \ref{fig:proj_to_uncons} depicts the $2$-dimensional test point $y \in \R^2$, and the set of unconstrained global optima $\gs$ which is generated using $\fs$. Figure \ref{fig:proj_to_cons} depicts the $3$-dimensional test point $y^c \in \R^3$, the feasible set $X$, and the set of constrained global optima $\gs^c$ generated by using $\fs^c$.

\begin{figure}[h]
    \centering
    \includegraphics[width=0.75\linewidth]{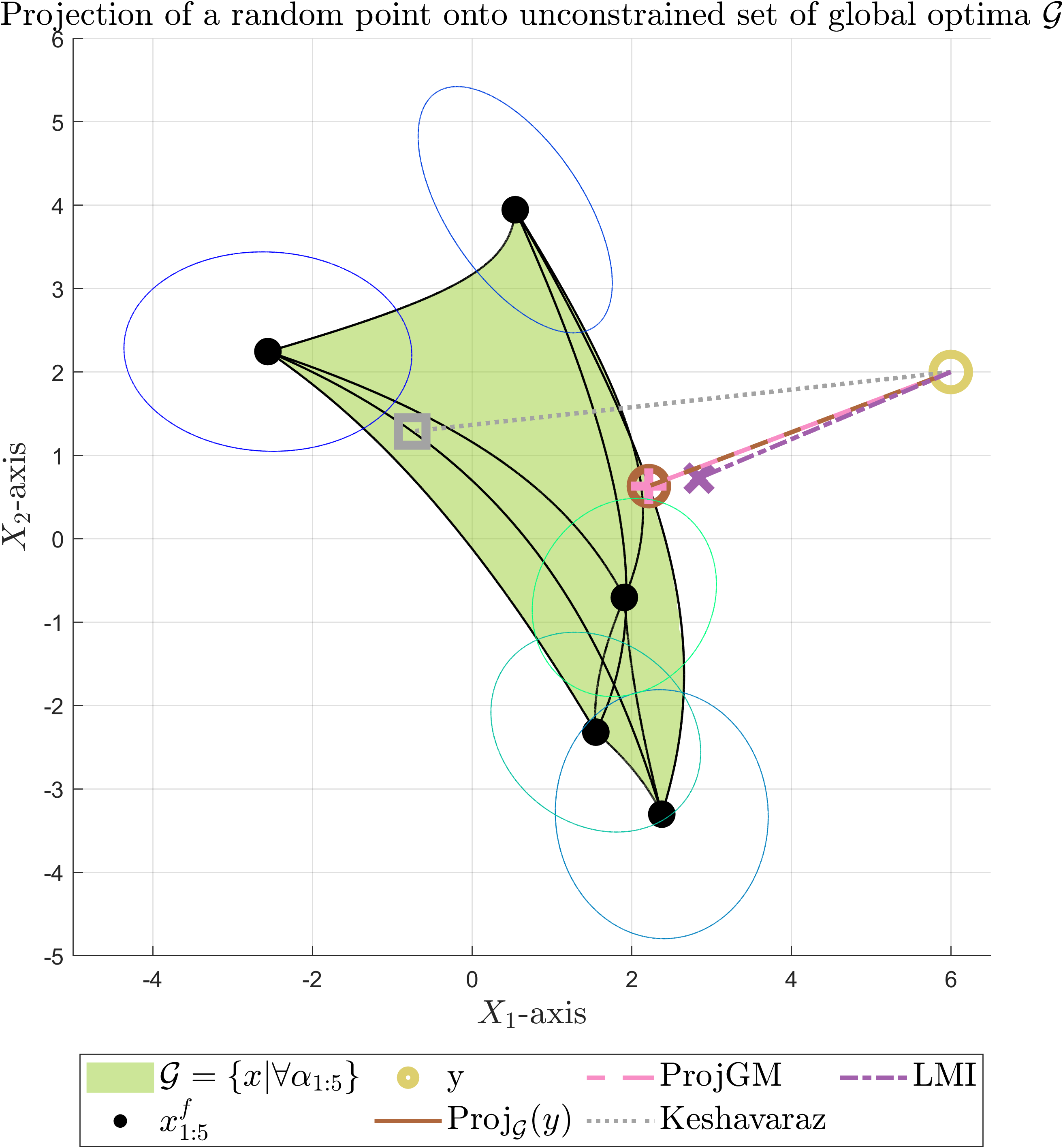}
    \caption{Comparison of the projection onto an unconstrained set of optima $\gs$.}
    \label{fig:proj_to_uncons}
\end{figure}

Both on Figure \ref{fig:proj_to_uncons} and \ref{fig:proj_to_cons} the actual minimum distance projection $\textrm{Proj}_{\gs}(y)$ is shown as reference, and is computed by a combined brute-force grid search and local bi-level search on the cost function parametrization $\alpha \in \Delta^5$ and $\alpha \in \Delta^3$ respectively. The minimum distance local bi-level formulation result, initialized with $\alpha_i = 0.2$, is annotated as $\textrm{ProjGM}$ and shown, alongside the Keshavaraz minimum KKT-constraint violation formulation \cite{keshavarz2011imputing}. The $M_{x1}$ entry of the PSD matrix output by algorithm \ref{alg:uncons_lmi} is also shown.

\begin{figure}[h]
    \centering
    \includegraphics[width=0.75\linewidth]{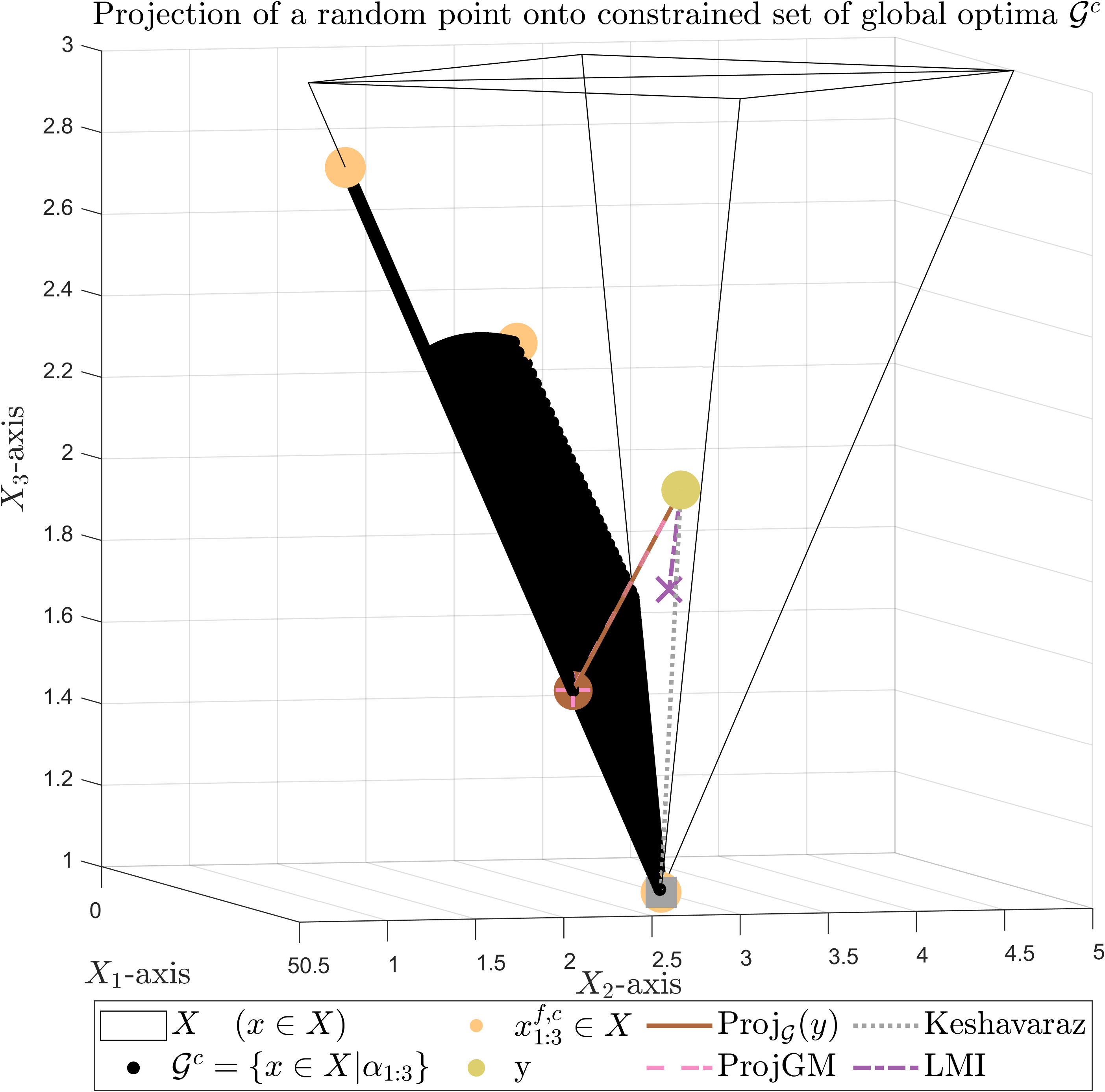}
    \caption{Comparison of the projection onto a constrained set of optima $\gs^c$.}
    \label{fig:proj_to_cons}
\end{figure}

From figures \ref{fig:proj_to_uncons} and \ref{fig:proj_to_cons} it is possible to see that the approximate Keshavaraz method can impute a strongly inconsistent objective if the test point is outside the set of global optima $\gs$. The LMI algorithms \ref{alg:uncons_lmi} and \ref{alg:cons_lmi}, are designed to invalidate a proposed set of basis functions with respect to a data set. In other words, the algorithm is able to say whether IOC with proposed basis functions can be successfully applied to a given data set.

Given a data set of a $100$ points $y \in \R^2$ generated by adding Gaussian noise to the shown test point in Figure \ref{fig:proj_to_uncons}, the distance of each of them to the set of global optima is calculated via different methods. The mean distance to the set of global optima for different methods is, in ascending order, $d_{\rm LMI} = 3.78$, $d_{\rm ProjGM} = 4.09$ and $d_{\rm Keshavaraz} = 6.98$, the true value being $d_{Proj_{\gs}(y)} = 4.09$.

In practice, for higher-dimensional data, running the bi-level method may be expensive. If the Keshavaraz method does not produce a zero or extremely-close-to-zero error, it is useful to be able to run the LMI algorithm to check the lower-bound of the distance from the set of global optima. If the lower bound is higher than the desired error in data replication, we can conclude that the proposed set of basis functions $\fs$ is not good enough to represent the data.

MATLAB (2021b) code to replicate figures and experiments is publicly available at 
\url{https://github.com/jarmill/inverse-optimal}. 
Dependencies include Mosek \cite{mosek92} and YALMIP \cite{lofberg2004yalmip}.
\section{Extensions} \label{sec:extensions}
This section will detail extensions of the current work and some discussion of future research directions.

\subsection{Piecewise Functions and Convex Lifts}
The method of convex lifts \cite{yannakakis1991expressing, gouveia2013lifts} may be applied to solve the \ac{pgm} problem over some classes of piecewise-defined costs $f_j$. Assume there exists a set of $C^1$ functions $\{w_{j \ell}(x)\}_{\ell=1}^{L_j}$ with ${L_j}$ finite such that the convex $f_j$ satisfies,
\begin{equation}
\label{eq:pw_max}
    f_j(x) = \max_{\ell \in 1 .. L_j} w_{j \ell}(x).
\end{equation}

\begin{rem}
Convexity is preserved under the pointwise maximum operation (though not under pointwise minimum). 
\end{rem}
\begin{rem}
The class of convex piecewise affine functions with $w_{j \ell}(x) = m_{j \ell}^T x - b_{j \ell}$ for $m_{j\ell} \in \R^{n \times 1}, b_{j \ell} \in \R$ may be expressed as an instance of \eqref{eq:pw_max}.
\end{rem}
New variables $\tau_j$ may be added for every max-representable function $f_j(x)$, forming the equivalent optimization problems with equal objectives:
\begin{align}
    f^* =& \min_{x \in X} \textstyle \sum_{j=1}^m f_j(x) \label{eq:pw_nondiff}\\
    f^* =& \min_{x \in X, \ \tau \in \R^m} \textstyle \sum_{j=1}^m \tau_j \label{eq:pw_diff} \\
    & \alpha_j w_{j \ell}(x) \leq \tau_j & \forall \ell=1..L_j, \ j = 1..m. \nonumber 
\end{align}

The formerly non-differentiable objectives $f_j(x)$ from \eqref{eq:pw_nondiff} are lifted into the constrained problem \eqref{eq:pw_diff} where each $w_{j \ell}(x)$ is differentiable for all $x \in X$. \ac{KKT} equations may then be written for \eqref{eq:pw_diff} and then utilized in \eqref{eq:gh_cons} to describe the cone $\gh^c(x, \tau, \alpha, \mu, \lambda)$ for use in the constrained \ac{pgm} problem \ref{prob:pgm_cons}.

\begin{rem}
Minimization of the $L_1$ norm $\norm{x}_1$ may be expressed as $\min \sum_{i=1}^n t_i: \ -t_i \leq x_i \leq t_i$ in $2n$ inequality constraints by adding $n$ new variables $\{t_i\}_{i=1}^n$ for use in constrained \ac{pgm}.
$L_p$ norms with rational $p \in [1, \infty)$ admit second-order-cone representations \cite{alizadeh2003second} through lifting, and can therefore be members of the dictionary $\fs$.
\end{rem}

A rank-1 solution $M$ of \eqref{eq:uncons_sdp} certifies that $p^*_{low} = p^*$ from \eqref{eq:pgm_uncons}, and the optimal $(x, \alpha)$ may be read from $M_{x 1}$ and $M_{\alpha 1}$

\subsection{Projection onto Local Minimizers}
This paper was restricted to convex cost functions in $\fs$ and convex sets $X$, with an additional polynomial requirement for \ac{pgm}. Problem \ref{prob:pgm} may be extended to finding the minimum distance $p^*$ between $y$ and some local minimizer of $f_\alpha(x)$. Theorem 12.6 (Eq. (12.65)) of \cite{wright1999numerical} outlines second-order necessary conditions for a point $x^*$ to be a local minimizer. In the unconstrained case, the Hessian matrix $\nabla^2_x f_\alpha(x)$ must be \ac{PD} in addition to $(x, \alpha)$ satisfying the first-order condition $\nabla_x f_\alpha(x) = 0$. The constrained case with non-convex functions requires more delicacy, as the quadratic form $w \rightarrow w^T \nabla^2_x f_\alpha(x) w$ must be positive for all $w \neq 0$ vectors inside the Critical Cone (Eq. (12.53) of \cite{wright1999numerical}) formed by $(x, \lambda, \mu)$. 

\subsection{Polynomial Optimization}
The \ac{pgm} programs \ref{prob:pgm_uncons} and \ref{prob:pgm_cons} are instances of Polynomial Optimization Problems when $f_j(x), g_k(x)$ are all convex polynomial functions of $x$. The sets $\gh(x, \alpha), \gh^c(x, \alpha, \mu, \lambda)$ are basic semialgebraic sets, and their projections $\gs(x), \gs^c(x)$ are in turn semialgebraic sets. The projections $\gs(x), \gs^c(x)$ may be analyzed by quantifier elimination algorithms such as the Cylindrical Algebraic Decomposition \cite{caviness2012quantifier}. The moment-\ac{SOS} hierarchy is a method that yields a rising sequence of lower bounds to the distances $p^*$ by solving a sequence of \acp{LMI} in combinatorially increasing size \cite{lasserre2009moments}. Theorem \ref{thm:compact} assures (under mild conditions) that the moment-\ac{SOS} relaxations of the unconstrained problem \ref{prob:pgm_uncons} will converge at a finite degree. Convergence of the moment-SOS hierarchy for bilevel Polynomial Optimization Problems are established in \cite{jeyakumar2016convergent}.
The \ac{LMI} presented in Section \ref{sec:lower_bounds} is an instance of the degree-1 moment-\ac{SOS} hierarchy as enriched with valid constraints. 

Given arbitrary convex polynomials $f_j(x), g_k(x)$, all constraints in $\gh(x, \alpha)$ are affine w.r.t. $\alpha$. Likewise, the describing constraints of $\gh^c(x, \alpha, \mu, \lambda)$ are affine in $(\alpha, \mu, \lambda)$ all together. A theorem of alternatives may be used to eliminate the affine-dependent groups $(\alpha)$ or $(\alpha, \mu, \lambda)$, yielding a set of linked \ac{LMI} constraints of smaller size that solely depend on $x$ \cite{ben2015deriving}. This reduction in the number of variables decreases the computational burden of solving \acp{LMI} as the degree increases. A full presentation and application of polynomial optimization for \ac{pgm} will take place in sequel work.
\section{Conclusion} \label{sec:conclusion}
This paper introduced a cost functions set invalidation interpretation of the \ac{pgm} problem. The geometry of optimal sets were explored in the constrained and unconstrained cases (focusing on convex quadratics). Numerical algorithms were implemented to upper and lower bound the \ac{pgm} optima.
Polynomial and convexity requirements are imposed for the \ac{pgm} problem, but possible extensions have been mentioned. 
Future work includes performing cost-function discovery in order to generate candidate functions $f(x)$ that would reduce the distance $p^*$ from \eqref{eq:pgm} when added to the set of cost functions $\fs$. 

Application to simple mechanical models where constraints are convex is possible, but for more complex mechanical systems extensions of this framework are required. Therefore, the proposed method should be applied for convex polynomial IOC problems based actual noisy human data. 

On the theoretical side, sensitivity methods from set valued analysis will be applied to determine which properties of $\gs$ and $\gh$ are preserved when assumptions are lifted (e.g. does Theorem \ref{thm:path_quad} hold when all $f_j$ are strongly convex rather than strongly convex quadratic?). Continuity properties of the solution map \eqref{eq:qp_argmin} and its selection \eqref{eq:y_selection} will also be explored \cite{aubin2009set, tam1999continuity}.
\section*{Acknowledgements}

F. Bečanović would like to thank Florent Lamiraux for his useful remarks.
J. Miller would like to thank his advisors Mario Sznaier and Didier Henrion, and the POP group at LAAS-CNRS for their comments and support.
\balance
\bibliographystyle{IEEEtran}
\bibliography{references.bib}
\end{document}